\newtheorem{theorem}{Theorem}[section]
\newtheorem{lemma}[theorem]{Lemma}
\newtheorem{proposition}[theorem]{Proposition}
\newtheorem{corollary}[theorem]{Corollary}
\theoremstyle{definition}
\newtheorem{definition}[theorem]{Definition}
\newtheorem{example}[theorem]{Example}
\newtheorem{Theorem}{\quad Theorem}[section]
\newtheorem{remark}[Theorem]{\quad Remark}
\numberwithin{equation}{section}
\begin{document}
\setcounter{page}{1}

\title[Numerical radius parallelism]
{Numerical radius parallelism of Hilbert space operators}

\author[M. Mehrazin, M. Amyari, A. Zamani]
{Marzieh Mehrazin$^1$, Maryam Amyari$^{2*}$ \MakeLowercase{and} Ali Zamani$^3$}

\address{$^1,^2$ Department of Mathematics, Mashhad Branch,
Islamic Azad University, Mashhad, Iran}
\email{marzie\_mehrazin@yahoo.com}
\email{amyari@mshdiau.ac.ir and maryam\_amyari@yahoo.com}

\address{$^3$ Department of Mathematics, Farhangian University, Iran}
\email{zamani.ali85@yahoo.com}

\subjclass[2010]{Primary 46B20; Secondary 47L05, 47A12.}

\keywords{operator norm, parallelism, numerical radius.\\
$*$Corresponding author}

\begin{abstract}
In this paper, we introduce a new type of parallelism
for bounded linear operators on a Hilbert space
$\big(\mathscr{H}, \langle \cdot ,\cdot \rangle\big)$
based on numerical radius.
More precisely, we consider
operators $T$ and $S$ which satisfy $\omega(T + \lambda S) = \omega(T)+\omega(S)$
for some complex unit $\lambda$.
We show that $T \parallel_{\omega} S$ if and only if
there exists a sequence of unit vectors $\{x_n\}$ in $\mathscr{H}$ such that
\begin{align*}
\lim_{n\rightarrow\infty} \big|\langle Tx_n, x_n\rangle\langle Sx_n, x_n\rangle\big| = \omega(T)\omega(S).
\end{align*}
We then apply it to give some applications.
\end{abstract} \maketitle
%%%%%%%%%%%%%%%%%%%%%%%%%%%%%%%%%%%
\section{Introduction and preliminaries}
Let $\mathbb{B}(\mathscr{H})$ denote the $C^{\ast}$-algebra of all bounded
linear operators on a complex Hilbert space $\mathscr{H}$ with an inner
product $\langle \cdot ,\cdot \rangle$ and the corresponding norm
$\|\cdot\| $. The symbol $I$ stands for the identity operator
on $\mathscr{H}$.
We shall identify $\mathbb{B}(\mathbb{C}^n)$ and the algebra of all
complex $n\times n$ matrices in the usual way.

Recall that the numerical range of $T\in\mathbb{B}(\mathscr{H})$ is the set
\begin{equation*}
W(T) = \big\{\langle Tx, x\rangle:x\in\mathscr{H},\|x\| = 1\big\},
\end{equation*}
and the numerical radius of $T$ is
\begin{equation*}
\omega(T) = \sup \big\{|\mu|: \mu\in W(T)\big\}.
\end{equation*}
The concepts of numerical range and numerical radius play important roles in many different
areas, especially mathematics and physics (see \cite{G.R, H.J}).

It is known that $W(T)$ is a nonempty bounded convex subset of $\mathbb{C}$,
and $\omega(T)$ is a norm on $\mathbb{B}(\mathscr{H})$ satisfying
$\frac{1}{2}\|T\| \leq \omega(T)\leq \|T\|$,
where $\|T\|$ denotes the operator norm of $T$.
If $T$ is self-adjoint, then $\omega(T) = \|T\|$ and if $T^2 = 0$,
then $\omega(T) = \frac{1}{2}\|T\|$ (see e.g., \cite{Dr} and \cite{G.R}).

Let us recall that by \cite[Lemma 3.2]{H.H.Z} we have
$\omega(x\otimes y) = \frac{1}{2}\left(|\langle x, y\rangle| + \|x\otimes y\|\right)$
and $\|x\otimes y\| = \|x\|\|y\|$, for all $x, y\in\mathscr{H}$.
Here, $x\otimes y$ denotes the rank one operator in $\mathbb{B}(\mathscr{H})$
defined by $(x\otimes y)(z) : = \langle z, y\rangle x$ for all $z\in\mathscr{H}$.
Also, we shall use $[x]$ to denote the linear space spanned by
vector $x\in \mathscr{H}$. 
For more material about the numerical radius and
other results on numerical radius inequality,
see, e.g., \cite{Dr, G.R, H.H.Z, H.J, K.M.Y, Y}, and the references therein.
%%%%%%%%%%%%%%%%%%%%%%%%%%%%%%%%%%%%%%%%%

Now, let $(\mathscr{X}, \|\cdot\|)$ be a normed space.
An element $x\in \mathscr{X}$ is said to be norm--parallel to another element
$y\in \mathscr{X}$ (see \cite{S, Z.M.1}), in short
$x\parallel y$, if
\begin{align*}
\|x+\lambda y\|=\|x\|+\|y\| \qquad \mbox{for some}\,\,
\lambda\in\mathbb{T}.
\end{align*}
Here, as usual, $\mathbb{T}$ is the unit cycle of the complex plane, i.e.
$\mathbb{T}=\{\mu\in\mathbb{C}: \,|\mu|=1\}$.
In the framework of inner product spaces, the norm--parallel relation
is exactly the usual vectorial parallel relation, that is,
$x\parallel y$ if and only if $x$ and $y$ are linearly dependent.
In the setting of normed linear spaces, two linearly
dependent vectors are norm--parallel, but the converse is false in general.
To see this consider the vectors $(1, 0)$ and $(1, 1)$ in the space
$\mathbb{C}^2$ with the max--norm.
Notice that the norm--parallelism is symmetric and $\mathbb{R}$-homogenous,
but is not transitive that is $x \|y$ and $y \| z$ does not imply $x \| z$ in general; 
see \cite[Example 2.7]{Z.M.2}, unless $\mathscr{X}$ is smooth at $y$; see \cite[Theorem 3.1]{W}).
%%%%%%%%%%%%%%%%%%%%%%%%%%%%%%%%%%%%%%%%%%%%%%%%%%%%%%

Some characterizations of the norm--parallelism for operators on various Banach spaces
and elements of an arbitrary Hilbert $C^*$-module were given in \cite{B.C.M.W.Z, G, M.S.P, W, Z, Z.M.1, Z.M.2}.
%%%%%%%%%%%%%%%%%%%%%%%%%%%%%%%%%%%%%%%%%%%%%%%%%%%%%%%

In particular, for $T, S\in\mathbb{B}(\mathscr{H})$, it was proved  \cite[Theorem 3.3]{Z.M.1}
that $T\parallel S$ if and only if there exists a sequence of unit
vectors $\{x_n\}$ in $\mathscr{H}$ such that
\begin{align}\label{id.001}
\lim_{n\rightarrow \infty } \big|\langle Tx_n, Sx_n\rangle\big| = \|T\|\,\|S\|.
\end{align}
%%%%%%%%%%%%%%%%%%%%%%%%%%%%%%

Now, let us introduce a new type of parallelism for Hilbert space operators based on numerical radius.
\begin{definition}\label{de.01}
An element $T\in\mathbb{B}(\mathscr{H})$ is called the numerical radius parallel
to another element $S \in\mathbb{B}(\mathscr{H})$, denoted by $T \parallel_{\omega} S$, if
\begin{align*}
\omega(T + \lambda S) = \omega(T)+\omega(S) \qquad \mbox{for some}\,\, \lambda\in\mathbb{T}.
\end{align*}
\end{definition}
If $T,S\in \mathbb{B}(\mathscr{H})$ are linearly dependent,
then there exists $\alpha\in\mathbb{C}\setminus \{0\}$ such that $S=\alpha T$.
By letting $\lambda=\frac{\overline{\alpha}}{|\alpha|},$ we have
\begin{align*}
\omega(T + \lambda S) = \omega(T + |\alpha| T) = (1 + |\alpha|)\omega(T) = \omega(T)+\omega(S),
\end{align*}
and hence $T \parallel_{\omega} S$.
Therefore we observe that two linearly dependent operators are numerical radius parallel,
but the converse is false in general.
For instance, consider the matrices
$T = \begin{bmatrix}
0 & 1\\
0 & 0
\end{bmatrix}$
and
$I = \begin{bmatrix}
1 & 0\\
0 & 1
\end{bmatrix}$.
Then $T$ and $I$ are linearly independent. But
for $\lambda=1$, a straightforward computation shows that
\begin{align*}
\omega(T + \lambda I) = \omega(T)+\omega(I) = \frac{5}{2},
\end{align*}
and so $T \parallel_{\omega} I$.
Notice that the numerical radius parallelism is a reflexive and symmetric relation.
The following example shows that it is not transitive.
\begin{example}
Let
$S =\begin{bmatrix}
1 & 0\\
0 & -1
\end{bmatrix}$,
$I =\begin{bmatrix}
1 & 0\\
0 & 1
\end{bmatrix}$
and
$R =\begin{bmatrix}
0 & 1\\
0 & 0
\end{bmatrix}$.
Then for $\lambda=1$, simple computations show that
\begin{align*}
\omega(S + \lambda I) = \omega(S)+\omega(I) = 2
\end{align*}
and
\begin{align*}
\omega(I + \lambda R) = \omega(I)+\omega(R) = \frac{3}{2}.
\end{align*}
Hence $S \parallel_{\omega} I$ and $I \parallel_{\omega} R$.
But for every $\lambda\in\mathbb{T}$, we have
\begin{align*}
\omega(S + \lambda R) = \frac{\sqrt{4 + |\lambda|^2}}{2} \neq \frac{3}{2} = \omega(S)+\omega(R).
\end{align*}
Thus $S \not\parallel_{\omega} R$.
\end{example}

In the next section, we study the numerical radius parallelism $\parallel_{\omega}$.
We establish some general properties of this relation.
We show that $T \parallel_{\omega} S$ if and only if
there exists a sequence of unit vectors $\{x_n\}$ in $\mathscr{H}$ such that
\begin{align*}
\lim_{n\rightarrow\infty} \big|\langle Tx_n, x_n\rangle\langle Sx_n, x_n\rangle\big| = \omega(T)\omega(S).
\end{align*}
We then apply it to prove that $T\parallel_{\omega} I$ for all $T\in \mathbb{B}(\mathscr{H})$.
Moreover, we prove that if $\dim \mathscr{H}\geq 3$
and $\langle Tx, y\rangle = 0$ for all $x\in \mathscr{H}$ and $y\in {[x]}^{\perp}$,
then $T$ is a scaler multiple of the identity $I$ if and only if 
$T\parallel_{\omega} S$ for all rank one operator $S$.
%%%%%%%%%%%%%%%%%%%%%%%%%
\section{Main results}
%%%%%%%%%%%%%%%%%%%%%%%%%
We begin with some properties of the numerical radius parallelism for Hilbert space operators.
%%%%%%%%%%%%%%%%%%%%%%%%%
\begin{proposition}\label{pr.01}
Let $T,S\in \mathbb{B}(\mathscr{H})$. Then the following conditions are equivalent:
\begin{itemize}
\item[(i)] $T \parallel_{\omega} S$.
\item[(ii)] $T^* \parallel_{\omega} S^*$.
\item[(iii)] $\gamma T \parallel_{\omega} \gamma S
\qquad (\gamma\in\mathbb{C}\smallsetminus\{0\})$.
\item[(iv)] $\alpha T \parallel_{\omega} \beta S
\qquad (\alpha, \beta\in\mathbb{R}\smallsetminus\{0\})$.
\end{itemize}
\end{proposition}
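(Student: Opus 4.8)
The plan is to treat the four statements via easy reductions built on two elementary facts about the numerical radius, both available because $\omega$ is a norm: its homogeneity $\omega(\gamma A) = |\gamma|\,\omega(A)$ for $\gamma\in\mathbb{C}$, and its invariance under adjoints, $\omega(A^*) = \omega(A)$. The latter follows at once from $\langle A^*x, x\rangle = \overline{\langle Ax, x\rangle}$, which gives $W(A^*) = \overline{W(A)}$ and hence equal radii. Throughout I use that $\omega(T + \lambda S) \le \omega(T) + \omega(S)$ by the triangle inequality, so in every case only the reverse inequality need be produced.

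For (i) $\Leftrightarrow$ (iii), I would simply compute $\omega(\gamma T + \mu \gamma S) = |\gamma|\,\omega(T + \mu S)$ and $\omega(\gamma T) + \omega(\gamma S) = |\gamma|\big(\omega(T) + \omega(S)\big)$; dividing by $|\gamma| > 0$ shows that a witness $\mu\in\mathbb{T}$ for $\gamma T \parallel_{\omega} \gamma S$ is exactly a witness for $T \parallel_{\omega} S$, and conversely (specializing $\gamma = 1$ recovers (i) from (iii)). For (i) $\Leftrightarrow$ (ii), if $\omega(T + \lambda S) = \omega(T) + \omega(S)$, then taking adjoints and using $\omega(A^*) = \omega(A)$ gives $\omega(T^* + \overline{\lambda} S^*) = \omega\big((T + \lambda S)^*\big) = \omega(T^*) + \omega(S^*)$; since $\overline{\lambda}\in\mathbb{T}$ this is precisely $T^* \parallel_{\omega} S^*$, and the converse is the same argument applied to $T^{**}=T$, $S^{**}=S$.

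The substance lies in (i) $\Leftrightarrow$ (iv). Since (iv) specialized to $\alpha = \beta = 1$ is (i), only (i) $\Rightarrow$ (iv) needs work, and in view of (iii) it suffices to prove the single-sided scaling lemma: if $T \parallel_{\omega} S$, then $T \parallel_{\omega} cS$ for every real $c \ne 0$ (indeed, from $T \parallel_{\omega} S$ apply (iii) with $\gamma = \alpha$ to get $\alpha T \parallel_{\omega} \alpha S$, then scale the second operand by the real number $\beta/\alpha$). To prove the lemma, I would fix a witness $\lambda$ with $\omega(T + \lambda S) = \omega(T) + \omega(S)$ and choose, straight from the definition of $\omega$ as a supremum, unit vectors $x_n$ with $\big|\langle (T + \lambda S)x_n, x_n\rangle\big| \to \omega(T) + \omega(S)$. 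Writing $p_n = \langle Tx_n, x_n\rangle$ and $q_n = \langle \lambda S x_n, x_n\rangle$, the bounds $|p_n| \le \omega(T)$, $|q_n| \le \omega(S)$ together with $|p_n + q_n| \to \omega(T) + \omega(S)$ squeeze $|p_n| \to \omega(T)$ and $|q_n| \to \omega(S)$, and then the identity $|p_n+q_n|^2 = |p_n|^2 + |q_n|^2 + 2\,\mathrm{Re}(\overline{p_n}q_n)$ forces the phase alignment $\mathrm{Re}(\overline{p_n}q_n) \to \omega(T)\omega(S)$.

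For $c > 0$ this alignment yields $\big|\langle(T + c\lambda S)x_n, x_n\rangle\big|^2 = |p_n|^2 + c^2|q_n|^2 + 2c\,\mathrm{Re}(\overline{p_n}q_n) \to \big(\omega(T) + c\,\omega(S)\big)^2$, so that $\omega(T + c\lambda S) \ge \omega(T) + c\,\omega(S) = \omega(T) + \omega(cS)$; combined with the triangle inequality this is equality, i.e. $T \parallel_{\omega} cS$ with witness $\lambda$. For $c < 0$ the same conclusion follows by replacing $\lambda$ with $-\lambda$ and invoking the case $|c|>0$ (the degenerate cases $T=0$ or $S=0$ being trivial). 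The main obstacle is precisely this lemma: the other three equivalences are one-line consequences of homogeneity and adjoint-invariance, whereas forcing equality after an asymmetric real scaling requires extracting the phase-alignment information from the approximating sequence (equivalently, a supporting-functional argument for the norm $\omega$).
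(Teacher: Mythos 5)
Your proof is correct, and for (i)$\Leftrightarrow$(ii) and (i)$\Leftrightarrow$(iii) it matches the paper, which simply declares these immediate from the definition (your one-line arguments via $\omega(A^*)=\omega(A)$ and homogeneity are exactly what is meant). For the substantive implication (i)$\Rightarrow$(iv), however, you take a genuinely different route. The paper stays entirely at the level of norm axioms: assuming without loss of generality $\alpha\geq\beta>0$ (signs being absorbable into the unimodular witness, as you do explicitly for $c<0$), it writes
\begin{equation*}
\alpha T + \lambda(\beta S) \;=\; \alpha(T+\lambda S) - (\alpha-\beta)(\lambda S),
\end{equation*}
applies the reverse triangle inequality to get $\omega\big(\alpha T + \lambda\beta S\big) \geq \alpha\,\omega(T+\lambda S) - (\alpha-\beta)\,\omega(S) = \omega(\alpha T)+\omega(\beta S)$, and concludes equality (with the same witness $\lambda$) from the ordinary triangle inequality --- three lines, no vectors. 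You instead extract an approximating sequence of unit vectors for $\omega(T+\lambda S)$, squeeze out $|p_n|\to\omega(T)$, $|q_n|\to\omega(S)$, deduce the phase alignment $\mathrm{Re}(\overline{p_n}q_n)\to\omega(T)\omega(S)$, and rescale inside the quadratic expansion. Both arguments are sound, and both ultimately produce the same witness. What each buys: the paper's argument uses only that $\omega$ is a norm, so it proves the statement verbatim for norm-parallelism in any normed space (the adjoint-invariance needed for (ii) being the sole Hilbert-space ingredient); your argument is longer and tied to the representation of $\omega$ as $\sup|\langle Tx,x\rangle|$, but the phase-alignment lemma you prove is precisely the engine of the paper's Theorem 2.2 (the sequential characterization of $\parallel_\omega$), so your proof of the proposition effectively pre-derives the key computation of that later theorem; conversely, if Theorem 2.2 were already available, your reduction of (iv) to the single-sided scaling lemma would become a short corollary of it.
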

%%%%%%%%%%%%%%%%%%%%%
\begin{proof}
The equivalences (i)$\Leftrightarrow$(ii)$\Leftrightarrow$(iii)
immediately follow from the definition of numerical radius parallelism.

(i)$\Leftrightarrow$(iv) Let $T \parallel_{\omega} S$.
Hence $\omega(T + \lambda S) = \omega(T) + \omega(T)$ for some $\lambda\in\mathbb{T}$.
Suppose that $\alpha, \beta\in\mathbb{R}\smallsetminus\{0\}$.
We can assume that $\alpha \geq \beta>0$. We therefore have
\begin{align*}
\omega(\alpha T) + \omega(\beta S)&\geq \omega\big(\alpha T + \lambda(\beta S)\big)
\\&= \omega\big(\alpha(T + \lambda S) - (\alpha-\beta)(\lambda S)\big)
\\&\geq \omega\big(\alpha(T + \lambda S)\big) - \omega\big((\alpha-\beta)\lambda S\big)
\\&= \alpha\omega(T + \lambda S) - (\alpha-\beta)\omega(S)
\\&=\alpha\big(\omega(T) + \omega(S)\big) - (\alpha-\beta)\omega(S)
\\&= \omega(\alpha T) + \omega(\beta S),
\end{align*}
whence $\omega\big(\alpha T + \lambda(\beta S)\big) = \omega(\alpha T) + \omega(\beta S)$.
Thus $\alpha T \parallel_{\omega} \beta S$.

The converse is obvious.
\end{proof}
%%%%%%%%%%%%%%%%%%%%%%%%%%%%%%%%%%
In the following result we characterize the numerical radius parallelism
for Hilbert space operators.
%%%%%%%%%%%%%%%%%%%%%%%%%%%%%%%%%%
\begin{theorem}\label{th.1}
Let $T,S\in \mathbb{B}(\mathscr{H})$. Then the following conditions are equivalent:
\begin{itemize}
\item[(i)] $T \parallel_{\omega} S$.
\item[(ii)] There exists a sequence of unit vectors $\{x_n\}$ in $\mathscr{H}$ such that
\begin{align*}
\lim_{n\rightarrow\infty} \big|\langle Tx_n, x_n\rangle\langle Sx_n, x_n\rangle\big| = \omega(T)\omega(S).
\end{align*}
\end{itemize}
In addition, if $\{x_n\}$ is a sequence of unit vectors in $\mathscr{H}$ satisfying $(ii)$,
then it also satisfies $\displaystyle{\lim_{n\rightarrow\infty}}|\langle Tx_n,x_n\rangle| = \omega(T)$
and $\displaystyle{\lim_{n\rightarrow\infty}}|\langle Sx_n,x_n\rangle| = \omega(S)$.
\end{theorem}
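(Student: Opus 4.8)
The plan is to exploit two elementary facts throughout. First, $|\langle Tx, x\rangle| \leq \omega(T)$ for every unit vector $x$, so the product $|\langle Tx_n, x_n\rangle\langle Sx_n, x_n\rangle|$ never exceeds $\omega(T)\omega(S)$. Second, $\omega$ is a norm with $\omega(\lambda S) = \omega(S)$ for $\lambda\in\mathbb{T}$, so $\omega(T + \lambda S) \leq \omega(T) + \omega(S)$ always. Both implications will then be reductions of the displayed equalities to these saturated inequalities. I would first dispose of the supplementary claim, since it feeds the main equivalence. Writing $a_n = |\langle Tx_n, x_n\rangle|$ and $b_n = |\langle Sx_n, x_n\rangle|$, one has $a_n \leq \omega(T)$, $b_n \leq \omega(S)$, and the hypothesis $a_n b_n \to \omega(T)\omega(S)$. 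Assuming, as we may (the cases $\omega(T) = 0$ or $\omega(S) = 0$ forcing $T = 0$ or $S = 0$ and being trivial), that both radii are positive, the inequality $\omega(T)\,b_n \geq a_n b_n$ gives $\liminf_n b_n \geq \omega(S)$, which combined with $b_n \leq \omega(S)$ forces $b_n \to \omega(S)$; symmetrically $a_n \to \omega(T)$. This squeeze is exactly the supplementary statement.

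For (i)$\Rightarrow$(ii), I would fix $\lambda\in\mathbb{T}$ with $\omega(T + \lambda S) = \omega(T) + \omega(S)$ and, from the supremum definition of numerical radius, choose unit vectors $\{x_n\}$ with $|\langle(T + \lambda S)x_n, x_n\rangle| \to \omega(T + \lambda S)$. The triangle inequality in $\mathbb{C}$ gives $a_n + b_n \geq |\langle(T + \lambda S)x_n, x_n\rangle| \to \omega(T) + \omega(S)$, while $a_n + b_n \leq \omega(T) + \omega(S)$. Since $(\omega(T) - a_n) + (\omega(S) - b_n) \to 0$ with both summands nonnegative, each tends to $0$; hence $a_n \to \omega(T)$, $b_n \to \omega(S)$, and therefore $a_n b_n \to \omega(T)\omega(S)$, which is (ii).

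The substantive direction is (ii)$\Rightarrow$(i). Given $\{x_n\}$ satisfying (ii), I would write the scalars in polar form, $\langle Tx_n, x_n\rangle = a_n e^{i\theta_n}$ and $\langle Sx_n, x_n\rangle = b_n e^{i\phi_n}$, where by the supplementary claim $a_n \to \omega(T)$ and $b_n \to \omega(S)$. I want a single $\lambda\in\mathbb{T}$ making $\lambda\langle Sx_n, x_n\rangle$ asymptotically codirectional with $\langle Tx_n, x_n\rangle$; the obstruction is that the required phase $e^{i(\theta_n - \phi_n)}$ depends on $n$. Here I would invoke the compactness of $\mathbb{T}$: passing to a subsequence, $e^{i(\theta_n - \phi_n)} \to \lambda$ for some $\lambda\in\mathbb{T}$. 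Factoring out $e^{i\theta_n}$ and letting $n\to\infty$ along the subsequence,
\begin{align*}
\big|\langle(T + \lambda S)x_n, x_n\rangle\big| = \big|a_n + \lambda\, b_n\, e^{i(\phi_n - \theta_n)}\big| \longrightarrow \big|\omega(T) + \lambda\,\omega(S)\,\overline{\lambda}\big| = \omega(T) + \omega(S),
\end{align*}
so $\omega(T + \lambda S) \geq \omega(T) + \omega(S)$; the reverse inequality is the triangle inequality noted above, giving equality and hence $T \parallel_{\omega} S$.

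The main obstacle is precisely this passage from an $n$-dependent optimal phase to a fixed $\lambda$, which compactness of the unit circle resolves. One only has to note that extracting a subsequence does no harm to the limits $a_n \to \omega(T)$ and $b_n \to \omega(S)$, since any subsequence of a convergent sequence converges to the same limit; the whole argument then runs along the chosen subsequence.
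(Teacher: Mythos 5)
Your proposal is correct and follows essentially the same route as the paper's proof: both directions reduce to saturated triangle inequalities along a maximizing sequence of unit vectors for $\omega(T+\lambda S)$, and the supplementary claim is obtained by the same squeeze argument. The only differences are cosmetic improvements: in (i)$\Rightarrow$(ii) you use the scalar triangle inequality directly where the paper expands $\big|\langle(T+\lambda S)x_n,x_n\rangle\big|^2$, and in (ii)$\Rightarrow$(i) you make explicit (via compactness of $\mathbb{T}$ and passage to a subsequence) the extraction of the single phase $\lambda$, a step the paper asserts without justification.
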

%%%%%%%%%%%%%%%%%%%%%%%
\begin{proof}
(i)$\Longrightarrow$(ii)
Let $T \parallel_{\omega} S$. Then there exists $\lambda\in\mathbb{T}$ such that
$\omega(T+\lambda S) = \omega(T)+\omega(S)$.
Since
\begin{align*}
\omega(T+\lambda S)=\sup\Big\{\big|\langle(T+\lambda S)x, x\rangle\big|: x\in \mathscr{H},\|x\|=1\Big\},
\end{align*}
there exists a sequence of unit vectors $\{x_n\}$ such that
\begin{align*}
\lim_{n\rightarrow\infty} \big|\langle(T+\lambda S)x_n, x_n\rangle\big| = \omega(T+\lambda S).
\end{align*}
Since $|\langle Tx_n,x_n\rangle|\leq \omega(T)$ and $|\langle Sx_n,x_n\rangle|\leq \omega(S)$
for every $n$, we have
\begin{align*}
\big(\omega(T)+\omega(S)\big)^2 & = \omega^2(T+\lambda S)
\\& = \lim_{n\rightarrow\infty} \big|\langle(T+\lambda S)x_n, x_n\rangle\big|^2
\\& = \lim_{n\rightarrow\infty}\Big(|\langle Tx_n,x_n\rangle|^2
+ 2\mbox{Re}\big(\lambda\langle Tx_n,x_n\rangle \langle Sx_n,x_n\rangle\big)
+ |\langle S x_n,x_n\rangle|^2\Big)
\\&\leq\lim_{n\rightarrow\infty}\Big(|\langle Tx_n,x_n\rangle|^2
+2\big|\langle Tx_n,x_n\rangle\langle Sx_n,x_n\rangle\big|+|\langle Sx_n,x_n|^2\Big)
\\&\leq \omega^2(T)+2\lim_{n\rightarrow\infty}\big|\langle Tx_n,x_n\rangle\langle Sx_n,x_n\rangle\big|+\omega^2(S)
\\&\leq\omega^2(T)+2\omega(T)\omega(S)+\omega^2(S)= \big(\omega(T)+\omega(S)\big)^2.
\end{align*}
From this it follows that
\begin{align*}
\lim_{n\rightarrow\infty}\big|\langle Tx_n,x_n\rangle\langle Sx_n,x_n\rangle\big| = \omega(T)\omega(S).
\end{align*}

(ii)$\Longrightarrow$(i) Suppose (ii) holds. Then
there exist a sequence of unit vectors $\{x_n\}$ in $\mathscr{H}$
and $\lambda\in\mathbb{T}$ such that
\begin{align*}
\lim_{n\rightarrow\infty} \lambda\langle Tx_n,x_n\rangle \langle Sx_n,x_n\rangle = \omega(T)\omega(S).
\end{align*}
It follows from
\begin{align*}
\omega(T)\omega(S)=\lim_{n\rightarrow\infty}\big|\langle Tx_n,x_n\rangle\langle Sx_n,x_n\rangle\big|
\leq \lim_{n\rightarrow\infty}|\langle Tx_n,x_n\rangle|\omega(S)\leq \omega(T)\omega(S)
\end{align*}
that $\displaystyle{\lim_{n\rightarrow\infty}}|\langle Tx_n,x_n\rangle| = \omega(T)$ and by using a similar argument,
$\displaystyle{\lim_{n\rightarrow\infty}}|\langle Sx_n,x_n\rangle| = \omega(S)$.
Since
\begin{align*}
\big|\langle(T+ \lambda S)x_n,x_n\rangle\big|^2
=|\langle Tx_n,x_n\rangle|^2
+ 2\mbox{Re}\big(\lambda\langle Tx_n,x_n\rangle \langle Sx_n,x_n\rangle\big)
+ |\langle Sx_n,x_n\rangle|^2
\end{align*}
for every $n$, from the above equality we get $\omega^2(T+\lambda S) =\omega^2(T)+2\omega(T)\omega(S) + \omega^2(S)$.
Thus $\omega(T+\lambda S) =\omega(T)+\omega(S)$, or equivalently $T \parallel_{\omega} S$.
\end{proof}
%%%%%%%%%%%%%%%%%%%%%%%%%%%%
Note that, if Hilbert space $\mathscr{H}$ is finite dimensional, then the closed unit ball
$\mathbf{N}_{\mathscr{H}}$ of $\mathscr{H}$ is compact.
So, the continuity of $T: \mathbf{N}_{\mathscr{H}}\to W(T)$, $x\mapsto \langle Tx, x \rangle$
ensures that $W(T)$ is compact and hence $\omega(T)$ takes its supremum. 
Therefore, as an immediate consequence of Theorem \ref{th.1} we have the following result. 
%%%%%%%%%%%%%%%%%%%%%%%%%%%%%%%%%%%%%%%%%%
\begin{corollary}
Let $\mathscr{H}$ be a finite dimensional Hilbert space and $T,S\in \mathbb{B}(\mathscr{H})$.
Then the following conditions are equivalent:
\begin{itemize}
\item[(i)] $T \parallel_{\omega} S$.
\item[(ii)] There exists a unit vector $x\in\mathscr{H}$ such that
\begin{align*}
|\langle Tx, x\rangle\langle Sx, x\rangle| = \omega(T)\omega(S).
\end{align*}
\end{itemize}
\end{corollary}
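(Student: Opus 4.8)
The plan is to derive both implications directly from Theorem \ref{th.1}, using nothing beyond the compactness observation recorded just above the statement. In effect, Theorem \ref{th.1} already characterizes $T \parallel_{\omega} S$ via a \emph{sequence} of unit vectors; in finite dimensions one merely collapses this sequence to a single optimal vector, and conversely promotes a single vector to a (constant) sequence.

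For the implication (ii)$\Rightarrow$(i), I would feed the single unit vector into Theorem \ref{th.1}. Given a unit vector $x$ with $|\langle Tx,x\rangle\langle Sx,x\rangle| = \omega(T)\omega(S)$, take the constant sequence defined by $x_n = x$ for all $n$. This sequence trivially satisfies condition (ii) of Theorem \ref{th.1}, so that theorem yields $T \parallel_{\omega} S$. For the converse (i)$\Rightarrow$(ii), I would start from the sequence produced by Theorem \ref{th.1}: assuming $T \parallel_{\omega} S$, there is a sequence of unit vectors $\{x_n\}$ with $\lim_{n\rightarrow\infty}|\langle Tx_n,x_n\rangle\langle Sx_n,x_n\rangle| = \omega(T)\omega(S)$. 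Since $\dim\mathscr{H}<\infty$, the closed unit ball is compact, so I would pass to a subsequence $\{x_{n_k}\}$ converging to some $x$; as the norm is continuous, $\|x\| = \lim_k\|x_{n_k}\| = 1$, so $x$ is again a unit vector. The maps $x\mapsto\langle Tx,x\rangle$ and $x\mapsto\langle Sx,x\rangle$ are continuous, hence taking limits along the subsequence gives $|\langle Tx,x\rangle\langle Sx,x\rangle| = \omega(T)\omega(S)$, which is precisely (ii).

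There is no genuine obstacle here; the entire content lies in the compactness and continuity bookkeeping. The only point that warrants a moment of care is the extraction of the convergent subsequence together with the verification that the product of the two continuous scalar-valued functions converges to its value at the limit point. This is immediate once each factor is regarded as a continuous function of the unit vector, so that their product is continuous as well. As the remark preceding the statement makes clear, it is exactly the compactness of the unit ball that upgrades the limiting sequence of Theorem \ref{th.1} into a single optimal vector, ensuring that the supremum defining $\omega$ is attained.
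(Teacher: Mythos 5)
Your proof is correct and follows essentially the same route as the paper: the paper states this corollary as an ``immediate consequence'' of Theorem \ref{th.1}, justified only by the preceding remark that in finite dimensions the unit ball is compact and $x\mapsto\langle Tx,x\rangle$ is continuous, which is precisely the compactness--continuity bookkeeping you carry out (constant sequence in one direction, convergent subsequence in the other).
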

%%%%%%%%%%%%%%%%%%%%%%%%
As another consequence of Theorem \ref{th.1} we have the following result.
%%%%%%%%%%%%%%%%%%%%%%%%%%%%%
\begin{corollary}\label{cr.01}
Let $T\in \mathbb{B}(\mathscr{H})$. Then the following statements hold.
\begin{itemize}
\item[(i)] $T \parallel_{\omega}\alpha I~~~~~$ for all $\alpha \in \mathbb{C}$.
\item[(ii)] $T \parallel_{\omega} T^*$.
\end{itemize}
\end{corollary}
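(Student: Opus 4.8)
The plan is to read off both parts directly from the implication (ii)$\Longrightarrow$(i) of Theorem~\ref{th.1}; in each case the task reduces to producing an explicit approximating sequence, which the structure of $\alpha I$ and of $T^*$ makes transparent. The one observation underlying everything is that, since $\omega(T)=\sup\{|\langle Tx,x\rangle|:\|x\|=1\}$, there always exists a sequence $\{x_n\}$ of unit vectors with $\lim_{n\to\infty}|\langle Tx_n,x_n\rangle|=\omega(T)$. I would fix such a sequence at the outset and use the same one for both statements.

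For (i), I would first treat $\alpha=0$ separately: then $\alpha I=0$ and $\omega(0)=0$, so $\omega(T+\lambda\cdot 0)=\omega(T)=\omega(T)+\omega(\alpha I)$ holds trivially and $T\parallel_{\omega}\alpha I$ by definition. For $\alpha\neq0$, the decisive computation is that a unit vector $x_n$ satisfies $\langle(\alpha I)x_n,x_n\rangle=\alpha$, hence $|\langle(\alpha I)x_n,x_n\rangle|=|\alpha|$, while $\omega(\alpha I)=|\alpha|\,\omega(I)=|\alpha|$ because $I$ is self-adjoint. Multiplying the convergence $|\langle Tx_n,x_n\rangle|\to\omega(T)$ by the constant $|\alpha|$ gives $\lim_{n\to\infty}|\langle Tx_n,x_n\rangle\langle(\alpha I)x_n,x_n\rangle|=\omega(T)\,|\alpha|=\omega(T)\,\omega(\alpha I)$, so condition (ii) of Theorem~\ref{th.1} is met and $T\parallel_{\omega}\alpha I$.

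For (ii), the only extra facts I need are the two elementary identities $\langle T^*x_n,x_n\rangle=\overline{\langle Tx_n,x_n\rangle}$ and $\omega(T^*)=\omega(T)$, the latter following from $W(T^*)$ being the complex conjugate of $W(T)$. Applying these to the same sequence yields $|\langle Tx_n,x_n\rangle\langle T^*x_n,x_n\rangle|=|\langle Tx_n,x_n\rangle|^2\to\omega(T)^2=\omega(T)\,\omega(T^*)$, and Theorem~\ref{th.1} again delivers $T\parallel_{\omega}T^*$.

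I do not expect a real obstacle: both assertions are formal corollaries of the sequential characterization, and no inequality needs to be pushed to equality by hand since the limiting vectors are chosen to realize $\omega(T)$ from the start. The only points demanding care are the explicit handling of the $\alpha=0$ boundary case and the verification of the homogeneity $\omega(\alpha I)=|\alpha|$ together with the conjugation identities, all of which I would state rather than leave implicit.
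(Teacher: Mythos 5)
Your proof is correct and follows essentially the same route as the paper: both fix a sequence of unit vectors realizing $\omega(T)$, compute $|\langle Tx_n,x_n\rangle\langle(\alpha I)x_n,x_n\rangle|\to\omega(T)\omega(\alpha I)$ and $|\langle Tx_n,x_n\rangle\langle T^*x_n,x_n\rangle|=|\langle Tx_n,x_n\rangle|^2\to\omega(T)\omega(T^*)$, and invoke the sequential characterization of Theorem~\ref{th.1}. Your explicit handling of the $\alpha=0$ case and of the identities $\omega(\alpha I)=|\alpha|$, $\omega(T^*)=\omega(T)$ is slightly more careful than the paper's but changes nothing substantive.
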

%%%%%%%%%%%%%%%%%%%%%%%%%%
\begin{proof}
By definition of $\omega(T)$, there exists a sequence of unit vectors in $\mathscr{H}$
such that
\begin{align*}
\lim_{n\rightarrow\infty}|\langle Tx_n,x_n\rangle| = \omega(T).
\end{align*}
Since $\omega(\alpha I)=|\alpha|$ and $\omega(T^*) = \omega(T)$, we have
\begin{align}\label{cr.011}
\lim_{n\rightarrow\infty}\big|\langle Tx_n,x_n\rangle\langle  (\alpha I)x_n,x_n\rangle\big|
= \lim_{n\rightarrow\infty}|\langle Tx_n,x_n\rangle||\alpha| = \omega(T)|\alpha| = \omega(T) \omega(\alpha I)
\end{align}
and
\begin{align}\label{cr.012}
\lim_{n\rightarrow\infty}\big|\langle Tx_n,x_n\rangle\langle T^*x_n,x_n\rangle\big|
= \lim_{n\rightarrow\infty}|\langle Tx_n,x_n\rangle|^2 = \omega^2(T) = \omega(T)\omega(T^*).
\end{align}
Hence by (\ref{cr.011}), (\ref{cr.012}) and Theorem \ref{th.1},
we obtain $T \parallel_{\omega} I$ and $T \parallel_{\omega} T^*$.
\end{proof}
%%%%%%%%%%%%%%%%%%%%%%%%%
As a consequence of Theorem \ref{th.1},
we have the following characterization of the numerical radius parallelism for
special type of rank one operators.
%%%%%%%%%%%%%%%%%%%%%%%%%%%%%%%%%%%%%%%%%%
\begin{corollary}
Let $x,y\in \mathscr{H}$. Then the following conditions are equivalent:
\begin{itemize}
\item[(i)] $x\otimes x \parallel_{\omega} y\otimes y$.
\item[(ii)] $x\parallel y$.
\end{itemize}
\end{corollary}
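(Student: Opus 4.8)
The plan is to convert the statement into the equality case of the Cauchy--Schwarz inequality through the characterization in Theorem \ref{th.1}. First I would record the two facts forced by the rank one structure. Since $x\otimes x$ is positive and self-adjoint, the formula $\omega(x\otimes y)=\frac{1}{2}\left(|\langle x,y\rangle|+\|x\otimes y\|\right)$ together with $\|x\otimes y\|=\|x\|\|y\|$, applied with $y=x$, gives $\omega(x\otimes x)=\|x\|^2$, and likewise $\omega(y\otimes y)=\|y\|^2$. Moreover, for every unit vector $z\in\mathscr{H}$ one computes $\langle(x\otimes x)z,z\rangle=|\langle z,x\rangle|^2$ and $\langle(y\otimes y)z,z\rangle=|\langle z,y\rangle|^2$. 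Substituting these into Theorem \ref{th.1} (with $T=x\otimes x$ and $S=y\otimes y$) shows that $x\otimes x\parallel_{\omega}y\otimes y$ holds if and only if there is a sequence of unit vectors $\{z_n\}$ in $\mathscr{H}$ with
\begin{align*}
\lim_{n\rightarrow\infty} |\langle z_n, x\rangle|\,|\langle z_n, y\rangle| = \|x\|\,\|y\|.
\end{align*}

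Since in a Hilbert space the norm--parallel relation for vectors coincides with linear dependence, which is exactly the equality case $|\langle x,y\rangle|=\|x\|\|y\|$ of Cauchy--Schwarz, it remains to prove that the displayed limit condition holds for some unit sequence if and only if $|\langle x,y\rangle|=\|x\|\|y\|$. The implication $(ii)\Rightarrow(i)$ is immediate: if $x=0$ or $y=0$ any unit sequence works, and otherwise $y$ is a scalar multiple of $x$, so the constant sequence $z_n=x/\|x\|$ realizes the limit.

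The substantive direction is $(i)\Rightarrow(ii)$. Assume $x,y\neq 0$. From $|\langle z_n,x\rangle|\leq\|x\|$ and $|\langle z_n,y\rangle|\leq\|y\|$, a product approaching the maximal value $\|x\|\|y\|$ forces each factor to its bound, so $|\langle z_n,x\rangle|\to\|x\|$ and $|\langle z_n,y\rangle|\to\|y\|$ (a subsequence argument rules out any cluster value below the bound). I would then decompose $z_n=c_n\frac{x}{\|x\|}+w_n$ with $w_n\perp x$, where $c_n=\langle z_n,x\rangle/\|x\|$ satisfies $|c_n|\to 1$; from $\|z_n\|=1$ we get $\|w_n\|^2=1-|c_n|^2\to 0$. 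Expanding $\langle z_n,y\rangle=c_n\langle x,y\rangle/\|x\|+\langle w_n,y\rangle$ and using $|\langle w_n,y\rangle|\leq\|w_n\|\|y\|\to 0$ yields $|\langle z_n,y\rangle|\to|\langle x,y\rangle|/\|x\|$. Comparing with $|\langle z_n,y\rangle|\to\|y\|$ gives $|\langle x,y\rangle|=\|x\|\|y\|$, the equality case of Cauchy--Schwarz, whence $x\parallel y$.

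The main obstacle I anticipate is precisely this last alignment step: one must genuinely use the constraint $\|z_n\|=1$ to conclude that the extremizing sequence becomes asymptotically parallel to $x$, thereby pinning the limiting value of $|\langle z_n,y\rangle|$ to $|\langle x,y\rangle|/\|x\|$. Everything preceding it is routine bookkeeping with the numerical radius and quadratic forms of rank one operators.
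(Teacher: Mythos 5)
Your proof is correct, and it diverges from the paper's in a meaningful way. Both arguments open identically: apply Theorem \ref{th.1} to $T=x\otimes x$, $S=y\otimes y$, compute $\langle (x\otimes x)z,z\rangle=|\langle z,x\rangle|^2$ and $\omega(x\otimes x)=\|x\|^2$, and reduce the parallelism to the existence of unit vectors $z_n$ with $\lim_n|\langle z_n,x\rangle|\,|\langle z_n,y\rangle|=\|x\|\,\|y\|$. The split comes in how this limit condition yields $x\parallel y$. The paper recognizes the quantity $|\langle z_n,x\rangle||\langle z_n,y\rangle|$ as $|\langle (x\otimes y)z_n,z_n\rangle|$ and its limit $\|x\|\|y\|$ as $\|x\otimes y\|\cdot\|I\|$, so that the characterization \eqref{id.001} of operator norm--parallelism gives $x\otimes y\parallel I$, and then it cites \cite[Corollary 2.23]{Z.M.2} to pass from $x\otimes y\parallel I$ to $x\parallel y$. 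You instead argue directly: the product attaining its maximum forces $|\langle z_n,x\rangle|\to\|x\|$, the orthogonal decomposition $z_n=c_n\frac{x}{\|x\|}+w_n$ with the constraint $\|z_n\|=1$ forces $\|w_n\|\to 0$, and comparing the two limits of $|\langle z_n,y\rangle|$ yields equality in Cauchy--Schwarz, hence linear dependence, hence $x\parallel y$. The paper's route is shorter on the page and situates the result within the norm--parallelism literature (it exhibits the bridge statement $x\otimes y\parallel I$, which is of independent interest); yours is self-contained, requiring neither \eqref{id.001} nor the external corollary from \cite{Z.M.2}, and it exposes the geometric mechanism explicitly --- the extremizing sequence must align asymptotically with $x$. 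Your treatment of the converse and of the degenerate cases $x=0$ or $y=0$ is also slightly more careful than the paper's one-line dismissal.
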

\begin{proof}
Suppose that $x\otimes x\|_\omega y\otimes y$. So, by Theorem \ref{th.1},
there exists a sequence of unit vectors $x_n$ in $\mathscr{H}$ such that
\begin{align}\label{cr.021}
\lim_{n\rightarrow\infty}\Big|\langle (x\otimes x)x_n,x_n\rangle\langle(y\otimes y)x_n,x_n\rangle\Big|
=\omega(x\otimes x)\omega(y\otimes y).
\end{align}
On the other side, we have
\begin{align}\label{cr.022}
\langle (x\otimes x)x_n,x_n\rangle\langle(y\otimes y)x_n,x_n\rangle
=\big|\langle x, x_n\rangle\langle x_n,y\rangle\big|^2
= \big|\langle (x\otimes y)x_n, x_n\rangle\big|^2
\end{align}
and
\begin{align}\label{cr.023}
\omega(x\otimes x)\omega(y\otimes y) =\|x\|^2\|y\|^2 = \|x\otimes y\|^2.
\end{align}
By (\ref{cr.021}), (\ref{cr.022}) and (\ref{cr.023}) it follows that
\begin{align*}
\lim_{n\rightarrow\infty} \big|\langle (x\otimes y)x_n, x_n\rangle\big| = \|x\otimes y\|,
\end{align*}
and by (\ref{id.001}) from the above equality we get $x\otimes y\parallel I$.
Now, by \cite[Corollary 2.23]{Z.M.2}, we conclude that $x\parallel y$.

The converse is obvious.
\end{proof}
%%%%%%%%%%%%%%%%%%%%%%%%%%%%%%%%%
Let us quote a result from Halmos \cite{Hal}.
%%%%%%%%%%%%%%%%%%%%%%%%%%%%%%%%%%
\begin{lemma}\cite{Hal}\label{le.011}
Let $T\in \mathbb{B}(\mathscr{H})$. Then $W(T) = \{\mu\}$ if and only if $T = \mu I$.
\end{lemma}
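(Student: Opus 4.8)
The plan is to prove the two implications separately, with essentially all the work in the forward (``only if'') direction. The reverse direction is immediate: if $T = \mu I$, then for every unit vector $x \in \mathscr{H}$ one has $\langle Tx, x\rangle = \mu\langle x, x\rangle = \mu$, so that $W(T) = \{\mu\}$.

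For the converse, assume $W(T) = \{\mu\}$, i.e.\ $\langle Tx, x\rangle = \mu$ for every unit vector $x$. First I would pass to the operator $S := T - \mu I$, for which $\langle Sx, x\rangle = \langle Tx, x\rangle - \mu\|x\|^2 = 0$ on the unit sphere; scaling then gives $\langle Sx, x\rangle = 0$ for all $x \in \mathscr{H}$. The problem thus reduces to the single assertion that a bounded operator whose quadratic form vanishes identically must itself be zero.

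The main step is to recover the full sesquilinear form from the quadratic form $q(x) := \langle Sx, x\rangle$ by polarization:
\begin{align*}
4\langle Sx, y\rangle = q(x + y) - q(x - y) + i\, q(x + iy) - i\, q(x - iy) \qquad (x, y \in \mathscr{H}).
\end{align*}
Since $q \equiv 0$, the right-hand side vanishes, whence $\langle Sx, y\rangle = 0$ for all $x, y \in \mathscr{H}$. Choosing $y = Sx$ forces $\|Sx\|^2 = 0$, so $Sx = 0$ for every $x$, that is $S = 0$ and therefore $T = \mu I$.

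The one point I would be careful to record is that the argument is intrinsically complex: the polarization identity invokes the scalar $i$, and the statement genuinely fails over a real Hilbert space, where a rotation by a right angle has identically vanishing quadratic form yet is nonzero. Since $\mathscr{H}$ is a complex Hilbert space throughout, this causes no difficulty, and the proof is complete.
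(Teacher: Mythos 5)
Your proof is correct and complete: the reduction to $S = T - \mu I$, the complex polarization identity (which you state correctly), and the choice $y = Sx$ together give a watertight argument, and your remark that the result fails over real scalars is exactly the right caveat. Note that the paper itself supplies no proof of this lemma at all; it is quoted as a known fact from Halmos's \emph{A Hilbert Space Problem Book}, and your polarization argument is precisely the standard proof underlying that citation, so there is nothing to reconcile between the two.
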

%%%%%%%%%%%%%%%%%%%%%%%%%%%%%%%%%%%%
We are now in a position to establish one of our main results.
%%%%%%%%%%%%%%%%%%%%%%%%%%%%%%%%%
\begin{theorem}
Let $\mathscr{H}$ be a complex Hilbert space with $\dim \mathscr{H}\geq 3$ 
and $T\in \mathbb{B}(\mathscr{H})$ such that $\langle Tx, y\rangle = 0$ for all $x\in \mathscr{H}$ and $y\in {[x]}^{\perp}$.
Then the following conditions are equivalent:
\begin{itemize}
\item[(i)] $T\parallel_{\omega} S$ for all rank one operator $S\in \mathbb{B}(\mathscr{H})$.
\item[(ii)] $T$ is a scaler multiple of the identity $I$.
\end{itemize}
\end{theorem}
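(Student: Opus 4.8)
The plan is to prove the two implications separately, with essentially all the work in $(i)\Rightarrow(ii)$, which I would reduce to showing that the numerical range $W(T)$ is a single point and then invoke Halmos' Lemma~\ref{le.011}.

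For $(ii)\Rightarrow(i)$ I would argue directly and without using the hypotheses on $T$ or on the dimension. Suppose $T=\mu I$ and let $S$ be any rank one operator. By Corollary~\ref{cr.01}(i), applied to the operator $S$ and the scalar $\mu$, we have $S\parallel_{\omega}\mu I$; since $\parallel_{\omega}$ is symmetric, this gives $\mu I\parallel_{\omega}S$, that is, $T\parallel_{\omega}S$.

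The substance is $(i)\Rightarrow(ii)$. Here I would fix an arbitrary unit vector $x\in\mathscr{H}$ and test the assumption against the particular rank one operator $S=x\otimes x$. Using the identity $\omega(x\otimes x)=\tfrac12\big(|\langle x,x\rangle|+\|x\otimes x\|\big)=1$, the relation $T\parallel_{\omega}(x\otimes x)$ combined with Theorem~\ref{th.1} produces a sequence of unit vectors $\{x_n\}$ realizing the product limit $\omega(T)\omega(x\otimes x)=\omega(T)$; moreover the ``in addition'' clause of Theorem~\ref{th.1} forces
\begin{align*}
\lim_{n\to\infty}|\langle Tx_n,x_n\rangle|=\omega(T)\qquad\text{and}\qquad \lim_{n\to\infty}|\langle (x\otimes x)x_n,x_n\rangle|=\lim_{n\to\infty}|\langle x_n,x\rangle|^2=1.
\end{align*}
From $|\langle x_n,x\rangle|\to1$ I would deduce that $\{x_n\}$ concentrates on the line $[x]$: writing $c_n=\langle x_n,x\rangle$ and $r_n=x_n-c_nx$, orthogonality gives $\|r_n\|^2=1-|c_n|^2\to0$. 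Since $|c_n|\to1$ and $\|Tr_n\|\le\|T\|\,\|r_n\|\to0$, expanding $\langle Tx_n,x_n\rangle$ along $x_n=c_nx+r_n$ yields $\langle Tx_n,x_n\rangle\to\langle Tx,x\rangle$. (The standing assumption $\langle Tx,y\rangle=0$ for $y\in[x]^{\perp}$, i.e. $Tx=\phi(x)x$, gives this limit even more directly.) Comparing with $|\langle Tx_n,x_n\rangle|\to\omega(T)$ gives $|\langle Tx,x\rangle|=\omega(T)$ for \emph{every} unit vector $x$. Thus $W(T)$ lies on the circle $\{z:|z|=\omega(T)\}$; being also convex, and a convex subset of a circle reducing to a single point (when $\omega(T)>0$; the case $\omega(T)=0$ forces $T=0$), $W(T)$ is a singleton, and Lemma~\ref{le.011} then gives $T=\mu I$.

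The step I expect to be the main obstacle is controlling $\langle Tx_n,x_n\rangle$ in the infinite dimensional setting, where $\{x_n\}$ need not converge. The concentration estimate $\|x_n-c_nx\|\to0$ is exactly what makes the continuity argument go through, and this is precisely the contribution of the hypothesis $\langle Tx,y\rangle=0$ on $[x]^{\perp}$, which pins $Tx$ to the line $[x]$; with this route the dimension hypothesis $\dim\mathscr{H}\ge 3$ is in fact not needed. The remaining geometric observation—that a convex set contained in a circle is a point—is the clean bridge to Halmos' Lemma~\ref{le.011}.
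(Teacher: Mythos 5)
Your proof is correct, and you handle $(ii)\Rightarrow(i)$ exactly as the paper does (Corollary \ref{cr.01}(i) plus symmetry of $\parallel_{\omega}$); but your proof of $(i)\Rightarrow(ii)$ takes a genuinely different, and in fact stronger, route. The paper tests the hypothesis against $S=x\otimes y$, where $y$ is a unit vector chosen orthogonal to both $x$ and $T^{*}x$ --- this choice is precisely where $\dim\mathscr{H}\geq 3$ enters --- then decomposes the approximating unit vectors as $x_n=\alpha_n x+\beta_n y+z_n$, passes to convergent subsequences of $\{\alpha_n\}$ and $\{\beta_n\}$, shows $|\alpha|=|\beta|=\frac{\sqrt{2}}{2}$ and $z_n\to 0$, and finally needs the standing hypothesis $\langle Tx,y\rangle=0$ (together with $\langle Ty,x\rangle=0$, which comes from the choice of $y$) to kill the cross terms in $\langle T(\alpha x+\beta y),\alpha x+\beta y\rangle$ and extract $|\langle Tx,x\rangle|=\omega(T)$. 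You instead test against the rank one projection $S=x\otimes x$, for which $\omega(S)=1$ and $\langle Sx_n,x_n\rangle=|\langle x_n,x\rangle|^{2}$; the resulting condition $|\langle x_n,x\rangle|\to 1$ forces the concentration $\|x_n-\langle x_n,x\rangle x\|\to 0$, and then norm-continuity of the quadratic form alone yields $\langle Tx_n,x_n\rangle\to\langle Tx,x\rangle$, with no cross terms to manage and no subsequence extraction. Both arguments finish identically: $|\langle Tx,x\rangle|=\omega(T)$ for every unit vector $x$, so $W(T)$ is a convex subset of a circle, hence a singleton, and Halmos' Lemma \ref{le.011} gives $T=\mu I$. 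What your route buys is substantial: it uses neither $\dim\mathscr{H}\geq 3$ nor the hypothesis $\langle Tx,y\rangle=0$ for all $y\in[x]^{\perp}$. This matters because that hypothesis says every vector is an eigenvector of $T$, which by elementary linear algebra already forces $T$ to be a scalar multiple of $I$; under it both (i) and (ii) hold automatically and the stated equivalence is trivial, whereas your argument proves the nontrivial statement with that hypothesis deleted. One small point you were right to flag: the ``in addition'' clause of Theorem \ref{th.1}, which you invoke to get $|\langle Sx_n,x_n\rangle|\to\omega(S)$, requires $\omega(T)>0$ (the paper's squeeze argument divides by it), and you correctly dispose of the case $\omega(T)=0$ separately, since then $T=0$ is already scalar.
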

\begin{proof}
(i)$\Longrightarrow$(ii) Suppose (i) holds. Let $x\in \mathscr{H}$ and $\|x\| = 1$.
We can choose $y\in {[x]}^{\perp}$ with $\|y\| = 1$ such that
$\langle T^*x, y\rangle = 0$. Put $S = x\otimes y$ and hence we get $\omega(S) = \frac{1}{2}$.
Now, our assumption implies that $T\parallel_{\omega} x\otimes y$.
So, by Theorem \ref{th.1}, there exists a sequence of unit vectors $\{x_n\}$ in $\mathscr{H}$ such that
\begin{align*}
\lim_{n\rightarrow\infty} \big|\langle Tx_n, x_n\rangle\langle (x\otimes y)x_n, x_n\rangle\big| = \frac{1}{2}\omega(T),
\end{align*}
\begin{align}\label{th.022}
\lim_{n\rightarrow\infty}|\langle Tx_n,x_n\rangle| = \omega(T)
\qquad \mbox{and} \qquad \lim_{n\rightarrow\infty}\big|\langle (x\otimes y)x_n,x_n\rangle\big| = \frac{1}{2}.
\end{align}
Let $x_n = \alpha_n x + \beta_n y + z_n$ such that $\langle z_n, x\rangle = \langle z_n, y\rangle = 0$.
Since sequences $\{\alpha_n\}$ and $\{\beta_n\}$ are both bounded 1
and we can suppose that (after passage to a subsequence) $\displaystyle{\lim_{n\rightarrow\infty}} \alpha_n = \alpha$
and $\displaystyle{\lim_{n\rightarrow\infty}} \beta_n = \beta$ for some $\alpha,~~~\beta$. From (\ref{th.022}) it follows that
\begin{align*}
\frac{1}{2} &= \lim_{n\rightarrow\infty}\big|\langle (x\otimes y)x_n,x_n\rangle\big|
\\& = \lim_{n\rightarrow\infty}\Big|\langle (x\otimes y)(\alpha_n x + \beta_n y + z_n), \alpha_n x + \beta_n y + z_n\rangle\Big|
\\& = \lim_{n\rightarrow\infty}\Big|\langle \langle\alpha_n x + \beta_n y + z_n, y\rangle x, \alpha_n x + \beta_n y + z_n\rangle\Big|
\\& = \lim_{n\rightarrow\infty} |\alpha_n||\beta_n| = |\alpha||\beta|,
\end{align*}
which gives
\begin{align}\label{th.023}
|\alpha||\beta| = \frac{1}{2}.
\end{align}
Further, we have
\begin{align}\label{th.024}
|\alpha_n|^2 + |\beta_n|^2 + \|z_n\|^2
= \langle \alpha_n x + \beta_n y + z_n, \alpha_n x + \beta_n y + z_n\rangle
= \|x_n\|^2 = 1,
\end{align}
and hence $|\alpha|^2 + |\beta|^2 \leq 1$. So, by (\ref{th.023}),
we reach $|\alpha| = |\beta| = \frac{\sqrt{2}}{2}$.
Therefore, form (\ref{th.024}) it follows that $\displaystyle{\lim_{n\rightarrow\infty}}\|z_n\|^2 = 0$,
or equivalently, $\displaystyle{\lim_{n\rightarrow\infty}}z_n = 0$.
Thus
\begin{align}\label{th.025}
\lim_{n\rightarrow\infty}x_n = \lim_{n\rightarrow\infty}(\alpha_n x + \beta_n y + z_n) = \alpha x + \beta y.
\end{align}
By (\ref{th.022}) and (\ref{th.025}) we get
\begin{align*}
\big|\langle T(\alpha x + \beta y), \alpha x + \beta y\rangle\big| = \omega(T).
\end{align*}
Since $\langle T^*x, y\rangle = \langle Tx, y\rangle = 0$, from the above equality we get
\begin{align*}
\big|\frac{1}{2}\langle Tx, x\rangle + \frac{1}{2}\langle Ty, y\rangle\big| = \omega(T),
\end{align*}
and hence
\begin{align*}
\omega(T) = \big|\frac{1}{2}\langle Tx, x\rangle + \frac{1}{2}\langle Ty, y\rangle\big|
\leq |\frac{1}{2}\langle Tx, x\rangle| + \frac{1}{2}\omega(T)
\leq \frac{1}{2}\omega(T) + \frac{1}{2}\omega(T) = \omega(T).
\end{align*}
Thus $|\langle Tx, x\rangle| = \omega(T)$ for all $x\in \mathscr{H}$ with $\|x\| = 1$.
This yields elements in $W(T)$ are of constant modulus. It follows then from the
convexity of $W(T)$ that the set is a singleton.
Hence by Lemma \ref{le.011} $T$ is a scaler multiple of the identity $I$.

(ii)$\Longrightarrow$(i) This implication follows immediately from
Corollary \ref{cr.01}(i).
\end{proof}
%%%%%%%%%%%%%%%%%%%%%%%%%%
\begin{remark}
Let $T, S$ be operators in $\mathbb{B}(\mathscr{H})$ such that $T\parallel S$.
Therefore, $\|T + e^{2i\theta} S\| = \|T\| + \|S\|$ for some $\theta\in\mathbb{R}$.
By \cite[Theorem 2.3]{K.M.Y} it follows that
\begin{align*}
\|T\| + \|S\| = \|T + e^{2i\theta} S\| \leq 2 \omega\Big(\begin{bmatrix}
0 & T\\
e^{-2i\theta}S^* & 0
\end{bmatrix}\Big) \leq \|T\| + \|S\|,
\end{align*}
or equivalently
\begin{align*}
\|T\| + \|S\| \leq 2 \omega\left(\begin{bmatrix}
0 & e^{i\theta}T\\
e^{-i\theta}S^* & 0
\end{bmatrix}\right) \leq \|T\| + \|S\|.
\end{align*}
Thus $\omega\left(\begin{bmatrix}
0 & e^{i\theta}T\\
e^{-i\theta}S^* & 0
\end{bmatrix}\right) = \frac{1}{2}(\|T\| + \|S\|)$ for some $\theta\in\mathbb{R}$.
\end{remark}
%%%%%%%%%%%%%%%%%%%%
\begin{remark}
Let $T, S$ be normal operators in $\mathbb{B}(\mathscr{H})$ such that $T\parallel_{\omega} S$.
So, $\omega(T + \lambda S) = \omega(T) + \omega(S)$ for some $\lambda\in\mathbb{T}$.
Since $\omega(T)=\|T\|$ and $\omega(S)=\|S\|$, we have
\begin{align*}
\omega(T) + \omega(S) = \omega(T + \lambda S)\leq \|T + \lambda S\|\leq \|T\| + \|S\| = \omega(T) + \omega(S).
\end{align*}
Hence $\|T + \lambda S\| = \|T\| + \|S\|$,
that is $T\parallel S.$ Therefore, by (\ref{id.001}), there exists a sequence of unit vectors $\lbrace x_n\rbrace$ in $\mathscr{H}$ such that
\begin{align*}
\lim_{n\rightarrow\infty}|\langle Tx_n,Sx_n\rangle| = \omega(T)\omega(S).
\end{align*}
\end{remark}

%%%%%%%%%%%%%%%%%%%%%%%%%%%%%%%%%%%%%%%%%%%%%

\bibliographystyle{amsplain}

\begin{thebibliography}{99}

\bibitem{B.C.M.W.Z} T. Bottazzi, C. Conde, M.S. Moslehian, P. W\'{o}jcik and A. Zamani,
\textit{Orthogonality and parallelism of operators on various Banach spaces},
J. Aust. Math. Soc. (to appear).

\bibitem{Dr} S. S. Dragomir,
\textit{A survey of some recent inequalities
for the norm and numerical radius of operators in Hilbert spaces},
Banach J. Math. Anal. \textbf{1} (2007), no. 2, 154--175.

\bibitem{G} P. Grover,
\textit{Orthogonality of matrices in the Ky Fan $k$-norms},
Linear Multilinear Algebra, \textbf{65} (2017), no. 3, 496--509.

\bibitem{G.R} K. E. Gustafson and D. K. M. Rao,
\textit{Numerical range. The field of values of linear operators and matrices},
Universitext. Springer-Verlag, New York, 1997.

\bibitem{Hal} P. R. Halmos,
\textit{A Hilbert space problem book},
2nd ed., Springer-Verlag, New York, 1982.

\bibitem{H.H.Z} K. He, J. C. Hou and X. L. Zhang,
\textit{Maps preserving numerical radius or cross norms of products of self-adjoint operators},
Acta Mathematica Sinica, English Series, \textbf{26} (2010), no. 6, 1071--1086.


\bibitem{H.J} R. A. Horn and C. R. Johnson,
\textit{Topics in Matrix Analysis},
Cambridge University Press, New York, 1991.


\bibitem{K.M.Y} F. Kittaneh, M.S. Moslehian and T. Yamazaki,
\textit{Cartesian decomposition and numerical radius inequalities},
Linear Algebra Appl. \textbf{471} (2015), 46--53.

\bibitem{M.S.P} A. Mal, D. Sain and K. Paul,
\textit{On some geometric properties of operator spaces},
arXiv:1802.06227v1 [math.FA] 17 Feb 2018.

\bibitem{S} A. Seddik,
\textit{Rank one operators and norm of elementary operators},
Linear Algebra Appl. \textbf{424} (2007), 177--183.

\bibitem{W} P. W\'{o}jcik,
\textit{Norm-parallelism in classical $M$-ideals},
Indag. Math. (N.S.) \textbf{28} (2017), no. 2, 287--293.

\bibitem{Y} T. Yamazaki,
\textit{On upper and lower bounds of the numerical radius and an equality condition},
Studia Math. \textbf{178} (2007), no. 1, 83--89.

\bibitem{Z} A. Zamani,
\textit{The operator-valued parallelism},
Linear Algebra Appl. \textbf{505} (2016), 282--295.

\bibitem{Z.M.1} A. Zamani and M. S. Moslehian,
\textit{Exact and approximate operator parallelism},
Canad. Math. Bull. \textbf{58}(1) (2015), 207--224.

\bibitem{Z.M.2} A. Zamani and M. S. Moslehian,
\textit{Norm-parallelism in the geometry of Hilbert $C^*$-modules},
Indag. Math. (N.S.) \textbf{27} (2016), no. 1, 266--281.


 
\end{thebibliography}

\end{document}